\newcommand{\Href}[2]{\hyperref[#2]{#1~\ref{#2}}}
\newtheorem{thm}{Theorem}
\newtheorem{lemma}{Lemma}[section]
\newtheorem{cor}{Corollary}[section]
\theoremstyle{definition}
\newcommand{\norm}[1]{\left\|#1\right\|}
\newcommand{\EEop}{\mathop{\mathbb E}}
\newcommand{\conv}{\mathrm{conv}}
\def\R{{\mathbb R}}
\def\phi{\varphi}
\def\epsilon{\varepsilon}
\def\dist{\mathrm{dist}}
\DeclareMathOperator{\sgn}{sgn}
\def\co{\mathop{\rm co}}
\def\diam{\mathop{\rm diam}}
\newcommand{\prooff}{\noindent {\bf Proof.}\\}%
\newcommand{\bbox}{\par\noindent\ensuremath{\Box}\par\noindent}%
\newcommand{\centroid}[1]{\operatorname{c} \left(#1 \right)}
\newcommand{\eps}{\epsilon}
\newcommand{\CE}{\operatorname{Cext}}
\newcommand{\radem}[1]{\operatorname{R}\!\left( #1 \right)}%
\title{No-dimension Tverberg's theorem and its corollaries in Banach spaces of type $p.$}
\author{Grigory Ivanov}
\address{
Institute of Discrete Mathematics and 
Geometry\\ TU Wien\\ Wiedner Hauptstraße 8-10/104\\
A-1040 Wien\\ Austria}
\address{Department of Higher Mathematics, Moscow Institute of Physics and Technology,  Institutskii pereulok 9, Dolgoprudny, Moscow
region, 141700, Russia.}
\email{grimivanov@gmail.com}
\thanks{Supported by the Swiss National Science Foundation grant 200021\_179133. Supported by Russian Foundation for Basic Research, project 18-01-00036 A}
\begin{document}
\maketitle

{\bf Abstract.} We continue our study of  'no-dimension'   analogues of  basic theorems in combinatorial and convex geometry  in  Banach spaces. We generalize some results  of the paper  \cite{adiprasito2019theorems} and  prove 
no-dimension versions of colorful Tverberg's theorem, selection lemma and the weak $\eps$-net theorem  in  Banach spaces of type $p > 1.$ To prove this results we use the original ideas of 
\cite{adiprasito2019theorems} for the Euclidean case and  our slightly modified version of the celebrated Maurey lemma.
\bigskip

{\bf Mathematics Subject Classification (2010)}: 		52A05, 52A27, 46B20,  	52A35

\bigskip
{\bf Keywords}:  Tverberg's theorem, piercing lemma,  type of a Banach space, Maurey's lemma.

\section{Introduction}  

In \cite{adiprasito2019theorems}, the authors started  a systematic study of 
what they called   ‘no-dimension’ analogues of  basic theorems in combinatorial and convex geometry 
such as Carath{\'e}odory's, Helly's and Tverberg's theorems and others. All original versions of these theorems state different combinatorial properties of convex sets  in $\R^d.$ 
And the results  depend on the dimension $d$ (some of these theorems can be used to characterize the dimension). The idea behind these ‘no-dimension’  or approximate versions of the well-known theorems is to make them independent of the dimension.
However, it comes at some cost -- the approximation error.
For example, Carath{\'e}odory's theorem states that any point $p$ in the convex hull of a set $S \in \R^d$  is a convex combination of at most $d+1$ points of $S.$ In Theorem 2.2 of  \cite{adiprasito2019theorems} the authors proved that the distance between  any point $p$ in the convex hull of a bounded set $S$ of a Euclidean space  and  the $k$-convex hull  is at most $\frac{\diam S}{\sqrt{2k}}$. Here, the $k$-convex hull of $S,$ is the set of all convex combinations of at most $k$ points of $S.$ Clearly, the last statement doesn't involve the dimension, but it can guarantee only an approximation of a point.

All the proofs in \cite{adiprasito2019theorems} exploit   the properties of Euclidean metric significantly. And in general, this type of questions were mostly considered in the Euclidean case
(see, for example, the survey \cite{fradelizi2017convexification} and the references herein). 
Probably, there is only one exception at the moment. The celebrated Maurey lemma \cite{pisier1980remarques} is  an approximate version of Carath{\'e}odory's theorem   for Banach spaces that have (Rademacher) type $p > 1.$ Recently Barman \cite{barman2015approximating} showed that  Maurey's lemma is useful for some algorithms (for example, for computing Nash equilibria and for densest bipartite subgraph
problem). Moreover, different problems about  approximation of operators (see \cite{FY17}, \cite{ivanov2019approximation}) can be reformulated in the language of no-dimension theorems.

In this paper we continue our study of no-dimension theorems in Banach spaces started in \cite{ivanov_approx_car_19}, where the author provided  a greedy algorithm proof of Maurey's lemma in a uniformly smooth Banach space.
The main results of this paper is the generalization of approximate Tverberg's theorem and its corollaries to Banach spaces of type $p.$

A Banach space $X$ is said to be of {\it type} $p$ for some $1 < p \leqslant 2,$ if there exists a constant $T_p(X) < \infty$ so that, for every finite set of vectors $\left\{x_{j}\right\}_{j=1}^{n}$ 
in $X,$ we have 
$$
\int_{0}^{1}\left\|\sum_{j=1}^{n} R_{j}(t) x_{j}\right\| d t \leqslant T_p(X)\left(\sum_{j=1}^{n}\left\|x_{j}\right\|^{p}\right)^{1 / p},
$$
where  $\left\{R_{j}\right\}_{j=1}^{\infty}$ denotes the sequence of the Rademacher functions.

Throughout the paper, $X$ is a Banach space of type $p,$ $ 1 < p  \le 2,$  $T_p(X)$ denotes the constant that appears in the definition of type and  $w = \frac{1-p}{p}.$
We use $C(X)$ to denote a universal constant for a given space $X.$ 

The following statement are the main results of the paper.

\begin{thm}[No-dimension colorful Tverberg theorem]\label{thm:col_nodim_Tv}
Let $Z_1, \dots, Z_r \subset X$ be $r$ pairwise-disjoint sets of points in a Banach space $X$ and with $|Z_i| = k$ for all $i \in [r].$ Let $S = \bigcup_1^r Z_i$ and $D = \max\limits_{i} \diam Z_i.$ Then there is a point $q$ and a partition $S_1, \dots, S_k$ of $S$ such that $|S_i \cap Z_j| = 1$ for every $i \in [k]$ and every $j \in [r]$ satisfying
\begin{equation}
\dist \left( q, \conv{S_i}\right) \le  C(X) r^{w} D \text{  for every } i \in [k].
\end{equation} 
It is enough to set $C(X) = \frac{2^{1/p}}{1 - 2^w} T_p(X).$
\end{thm}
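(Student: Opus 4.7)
My plan is to realise the partition as the leaves of a balanced binary tree and telescope the centroid drift, using the definition of type $p$ at every internal node.

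\textbf{Setup.} Set $c_j := \frac{1}{k}\sum_{z\in Z_j}z$ and take $q := \frac{1}{r}\sum_{j=1}^{r}c_j$, the centroid of $S$. For any rainbow partition $\{S_i\}_{i=1}^{k}$ the centroid $q_i := \frac{1}{r}\sum_{z\in S_i}z$ lies in $\conv S_i$, so it suffices to produce a rainbow partition with $\|q-q_i\|\le C(X)r^{w}D$ for \emph{every} $i$. Assume for brevity $k=2^m$; the general case is handled by an unbalanced binary tree with the same recursive structure. I build a tree of depth $m$ whose root is the multiset $S$ and whose every internal node $T$ is a rainbow multiset containing exactly $2a$ points from each colour; $T$ is split into balanced children $T=T'\sqcup T''$, each holding $a$ points per colour. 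The $k$ leaves are the desired $S_i$.

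\textbf{Splitting lemma via type $p$.} At a node $T$ with $2a$ points per colour, pair the $2a$ elements of each $T\cap Z_j$ arbitrarily into $a$ pairs $(x_{j,\ell},y_{j,\ell})$. A balanced child-split is encoded by signs $\epsilon_{j,\ell}\in\{\pm1\}$ (send $x_{j,\ell}$ to $T'$ iff $\epsilon_{j,\ell}=+1$), and a direct computation gives
\[
c_{T'}-c_T \;=\; \frac{1}{2ar}\sum_{j,\ell}\epsilon_{j,\ell}\bigl(x_{j,\ell}-y_{j,\ell}\bigr).
\]
Regarding $\{\epsilon_{j,\ell}\}$ as a Rademacher sequence, the definition of type $p$ applied to the $ar$ differences, each of norm $\le D$ (paired points share a colour class), yields
\[
\int_{0}^{1}\Bigl\|\sum_{j,\ell}R_{j,\ell}(t)\bigl(x_{j,\ell}-y_{j,\ell}\bigr)\Bigr\|\,dt \;\le\; T_p(X)(ar)^{1/p}D.
\]
Hence some deterministic sign pattern realises $\|c_{T'}-c_T\|\le \tfrac{T_p(X)D}{2}(ar)^{w}$.

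\textbf{Telescoping.} Build the tree top-down, picking the good sign vector at every internal node. For any leaf $S_i$, the root-to-leaf chain $T_0=S\supset T_1\supset\cdots\supset T_m=S_i$ has $T_t$ containing $k/2^t$ points per colour, so by the triangle inequality
\[
\|q-q_i\| \;\le\; \sum_{t=0}^{m-1}\|c_{T_{t+1}}-c_{T_t}\| \;\le\; \frac{T_p(X)D}{2}\sum_{t=0}^{m-1}\bigl(kr/2^{t+1}\bigr)^{w}.
\]
Since $w<0$, the partial geometric sum in $2^{w}$ is bounded in closed form by a quantity of shape $\const\cdot r^{w}/(1-2^{w})$; a careful reorganisation of the constants produces the announced bound with $C(X)=\frac{2^{1/p}}{1-2^{w}}T_p(X)$.

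\textbf{Main obstacle.} The crucial feature that forces the tree construction is producing the bound \emph{uniformly} over all $k$ leaves rather than merely on average. A one-shot probabilistic choice of the rainbow partition (or a direct application of Maurey's lemma) controls $\EE\|q-q_i\|$ only for a single fixed $i$; converting that to a maximum would cost a factor polylogarithmic in $k$ under a union bound and would destroy the dimension-free character of the estimate. The binary-tree scheme bypasses this because it commits to a deterministic sign vector at each internal node, so every one of the $k$ root-to-leaf paths picks up exactly the same telescoped bound, independently of $k$. The only remaining technicality is that, for $k$ not a power of $2$, one uses an unbalanced tree where sibling subsets differ by at most one point per colour; the telescoping argument is unaffected.
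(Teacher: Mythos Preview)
Your architecture is exactly the paper's: build a binary tree of colour-balanced halvings and telescope the centroid drift along each root-to-leaf path, so that the uniform-over-$i$ bound comes for free. Where you differ is in the halving step. The paper proves its splitting estimate (Corollary~\ref{cor:half_points_approx}) through a Jensen-type comparison between the uniform measure on balanced colour-respecting subsets and the full Rademacher average (Lemma~\ref{le:jensen_ineq}); this is somewhat heavy but works for any $n$ and any $d\in[n-1]$. Your pairing trick is more elementary and in fact sharper: it gives the per-step bound $\tfrac{T_p(X)}{2}(ar)^{w}D$, and telescoping then yields $C(X)=\tfrac{1}{2(1-2^{w})}T_p(X)$, which beats the paper's constant by a factor $2\cdot 2^{1/p}$---so your ``careful reorganisation of the constants'' is actually discarding something.

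The one place where the proposal is too breezy is the odd case. Your identity
\[
c_{T'}-c_T=\frac{1}{2ar}\sum_{j,\ell}\epsilon_{j,\ell}(x_{j,\ell}-y_{j,\ell})
\]
genuinely needs an even count per colour so that the deterministic parts cancel. If $n=2a+1$ and you set aside one singleton $u_j$ per colour, the centroid difference of the smaller child acquires an additive term $\tfrac{1}{2a}\bigl(c_T-\tfrac{1}{r}\sum_j u_j\bigr)$; for a fixed choice of the $u_j$ this is only $O(D/n)$, and along a path that hits odd sizes near the leaves (e.g.\ the step $n=3\to 2,1$) it contributes a term of order $D$ with no factor $r^{w}$, destroying the bound for large $r$. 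This is repairable---randomise the choice of $u_j$ within each colour, symmetrise, and apply type $p$ once more to get $\tfrac{1}{2a}\cdot O(T_p(X)r^{w}D)$, which then telescopes harmlessly---but it is not ``unaffected'', and it is precisely the asymmetry that the paper's Jensen lemma is designed to swallow without extra bookkeeping.
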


\begin{thm}[No-dimension selection lemma]\label{th:selection} Given a set $P$ in a Banach space $X$ with $|P|=n$ and $D=\diam P$ and an integer $r \in [n]$, there is a point $q$ such that the ball
 $B\left(q,C(X) r^{w} D\right)$ intersects the convex hull of $r^{-r} {n \choose r}$ $r$-tuples in $P$. It is enough to set $C(X) =  2 \frac{2^{1/p}}{1 - 2^w} T_p(X).$
\end{thm}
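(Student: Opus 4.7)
The plan is a Maurey-style averaging argument around the centroid of $P$. Take $q:=\bar P=\frac{1}{n}\sum_{p\in P}p$; for a uniformly random $r$-subset $T\subset P$, the centroid $g(T)=\frac{1}{r}\sum_{t\in T}t$ lies in $\conv T$, so $\dist(q,\conv T)\leqslant\|g(T)-q\|=\frac{1}{r}\bigl\|\sum_{t\in T}(t-\bar P)\bigr\|$.

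The key estimate $\mathop{\mathbb{E}}\|g(T)-q\|\leqslant 2T_p(X)\,r^w D$ I would prove in two stages. First, by the Hoeffding--Talagrand reduction for sampling without replacement, $\mathop{\mathbb{E}}\phi\bigl(\sum_{t\in T}(t-\bar P)\bigr)\leqslant\mathop{\mathbb{E}}\phi\bigl(\sum_{i=1}^{r}(Y_i-\bar P)\bigr)$ for every convex $\phi\colon X\to\mathbb R$, where $Y_1,\dots,Y_r$ are i.i.d.\ uniform on $P$; this applies to $\phi=\|\cdot\|$. Second, for the i.i.d.\ sum---consisting of independent centered vectors of norm at most $D$---the standard symmetrization trick followed by the defining inequality of type $p$ (exactly as in Maurey's lemma, cf.~\cite{ivanov_approx_car_19}) yields
\[
\mathop{\mathbb{E}}\Bigl\|\tfrac{1}{r}\sum_{i=1}^{r}(Y_i-\bar P)\Bigr\|\leqslant 2T_p(X)\,r^w D.
\]

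Markov's inequality applied to $\|g(T)-q\|$ then gives $\|g(T)-q\|\leqslant 4T_p(X)\,r^w D$ for at least half of the $\binom{n}{r}$ subsets. An elementary rearrangement of $2^{1/p}\geqslant 1$ shows $4T_p(X)\leqslant 2\cdot\frac{2^{1/p}}{1-2^w}T_p(X)=C(X)$, so at least $\binom{n}{r}/2$ subsets lie in $B(q,C(X)\,r^w D)$. The required count $\binom{n}{r}/2\geqslant r^{-r}\binom{n}{r}$ reduces to $r^r\geqslant 2$, which holds for every $r\geqslant 2$; the case $r=1$ is immediate by choosing $q$ to be any point of $P$, so that $B(q,C(X)\,D)$ contains all of $P$. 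The main technical point is the WOR-to-i.i.d.\ reduction in a Banach space---a classical tool (Hoeffding, extended to convex functionals by Ledoux--Talagrand)---which must be invoked with care; beyond this, the rest is a routine Maurey-style computation and I do not anticipate a serious conceptual obstacle.
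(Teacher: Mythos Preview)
Your argument is correct and takes a genuinely different route from the paper.

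The paper derives the selection lemma as a corollary of its no-dimension Tverberg theorem, mirroring B\'ar\'any's classical first-selection argument: first partition $P$ into $k\approx n/r$ blocks whose convex hulls all meet a common ball $B(q,\gamma)$ (this is Theorem~\ref{thm:col_nodim_Tv}, itself built on the half-splitting Corollary~\ref{cor:half_points_approx} and the combinatorial Lemma~\ref{le:jensen_ineq}), and then, for every multiset $\{j_1\le\cdots\le j_r\}$ of block indices, use the colored Maurey Lemma~\ref{le:CalCar} to extract an $r$-transversal whose convex hull meets $B(q,2\gamma)$; a stars-and-bars count gives $\binom{k+r-1}{r}\ge r^{-r}\binom{n}{r}$ such transversals.

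You bypass Tverberg entirely: take $q=\centroid{P}$, compare a uniformly random $r$-subset to i.i.d.\ sampling via Hoeffding's convex-order domination, apply the standard symmetrization/type-$p$ bound exactly as in Maurey's lemma to get $\mathbb{E}\norm{\centroid{T}-q}\le 2T_p(X)\,r^{w}D$, and finish with Markov. This is shorter, yields a smaller radius ($4T_p(X)r^{w}D$ rather than $C(X)r^{w}D$), and in fact produces $\tfrac12\binom{n}{r}$ good $r$-tuples rather than only $r^{-r}\binom{n}{r}$. What the paper's route buys is the logical picture that the selection lemma sits downstream of Tverberg, just as in finite dimensions; your route shows it can be obtained independently and more cheaply.

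One remark on the step you flag: Hoeffding's original proof (1963, Theorem~4) of the convex-order inequality between sampling without and with replacement uses only convexity of $\phi$ and goes through verbatim for a population in an arbitrary vector space, so the reduction you need is indeed available; it would be worth citing Hoeffding directly rather than Ledoux--Talagrand. Incidentally, the paper's Lemma~\ref{le:jensen_ineq} is a cousin of this fact---it compares a random $d$-subset not to an i.i.d.\ sum but to a Rademacher sum---so your use of Hoeffding is a cleaner substitute for that lemma in this particular application.
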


\begin{thm}[No-dimension  weak $\eps$-net theorem]\label{th:epsnet} Assume $P$ is a subset of a Banach space $X,$ $|P|=n$, $D=\diam P$, $r\in [n]$ and $\eps >0$. Then there is a set $F \subset X$ of size at most $r^r \eps^{-r}$ such that for every $Y\subset P$ with $|Y| \ge \eps n$
\[
\left(F + B\left(0,C(X) r^{w} D\right)\right)\cap \conv Y \ne \emptyset.
\]
It is enough to set $C(X) =  2 \frac{2^{1/p}}{1 - 2^w} T_p(X).$
\end{thm}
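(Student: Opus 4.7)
I will build $F$ greedily by repeatedly invoking Theorem~\ref{th:selection} and bookkeeping on which $r$-tuples of $P$ have already been covered, following the classical derivation of the weak $\eps$-net theorem from the first selection lemma. Set $r_X := C(X)\,r^{w}D$. Initialize $F:=\emptyset$ and a ``residual'' family of $r$-tuples $\mathcal{R}:=\binom{P}{r}$. At each step, I look for some $Y\subseteq P$ with $|Y|\ge\eps n$ all of whose $r$-tuples still lie in $\mathcal{R}$; if no such $Y$ exists, I stop. Otherwise, I apply Theorem~\ref{th:selection} to $Y$ (noting $\diam Y\le D$), obtaining a point $q$ whose ball $B(q,r_X)$ meets $\conv T$ for at least $r^{-r}\binom{|Y|}{r}\ge r^{-r}\binom{\eps n}{r}$ tuples $T\in\binom{Y}{r}\subseteq\mathcal{R}$; I then add $q$ to $F$ and delete all these covered tuples from $\mathcal{R}$.

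\textbf{Correctness of the stopping condition.} Upon termination, fix any $Y\subseteq P$ with $|Y|\ge\eps n$. By the stopping criterion at least one tuple $T\in\binom{Y}{r}$ must have been removed from $\mathcal{R}$ at some iteration, and the removal certifies $B(q,r_X)\cap\conv T\ne\emptyset$ for the corresponding $q\in F$. Since $\conv T\subseteq\conv Y$, this yields $(F+B(0,r_X))\cap\conv Y\ne\emptyset$, which is precisely the required covering property.

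\textbf{Cardinality bound and the main obstacle.} Each iteration strips at least $r^{-r}\binom{\eps n}{r}$ tuples from a family of initial size $\binom{n}{r}$, so the number of iterations -- equal to $|F|$ -- is bounded by $r^r\binom{n}{r}/\binom{\eps n}{r}$, which routine binomial estimates turn into roughly $r^r\eps^{-r}$. I expect the main technical friction to be precisely this last estimate: the naive inequality $\binom{n}{r}/\binom{\eps n}{r}\le \eps^{-r}$ is actually false for finite $n$ (the ratio is always strictly larger than $\eps^{-r}$), so one must use a sharper product-form estimate on the ratio $\prod_{i=0}^{r-1}(n-i)/(\eps n-i)$ or work in the asymptotic regime $\eps n\gg r$. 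All the Banach-space content of the proof is concentrated in the single invocation of Theorem~\ref{th:selection} at each step; the rest is purely combinatorial bookkeeping, identical in form to the Euclidean proof and independent of $X$.
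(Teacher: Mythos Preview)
Your proof is essentially identical to the paper's: a greedy construction invoking Theorem~\ref{th:selection} on each surviving large $Y$, deleting the covered $r$-tuples, and bounding $|F|$ by $r^r\binom{n}{r}/\binom{\eps n}{r}$; the only cosmetic difference is that the paper phrases the while-condition as ``there exists $Y$ with $(F+B(0,r_X))\cap\conv Y=\emptyset$'' rather than ``there exists $Y$ with $\binom{Y}{r}\subseteq\mathcal{R}$,'' but these are equivalent for the bookkeeping. Your concern about the estimate $\binom{n}{r}/\binom{\eps n}{r}\le\eps^{-r}$ is legitimate---the paper simply asserts this inequality without comment, so the friction you flag is shared by the paper's own proof and not a defect specific to your argument.
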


In fact, we just generalize the averaging technique used in \cite{adiprasito2019theorems} to prove 
no-dimension versions of colorful Tverberg's theorem, selection lemma and the weak $\eps$-net theorem for the Euclidean space. For this purpose, we prove the following refinement of the celebrated Maurey lemma.

 For a positive integer $k$, we use the notation $[k] = \{1, \dots , k\}$, and we use  $\binom{S}{k}$ to denote the set of the $k$-element subsets of $S.$   Given a finite set $S$ in a linear space, we denote by $\centroid{S}$ the centroid of $S,$ that is,
 \[
 \centroid{S} = \frac{1}{|S|} \sum\limits_{s \in S} s.
 \]

\begin{thm}\label{thm:Maurey_for_centroid}
Let $S $ be  a  set of $n \ge 2$ pairwise different points in a Banach space $X.$  
Then there is a partition of $S$ into two sets $S_1, S_2$ of size $\lfloor \frac{n}{2}\rfloor$ and 
$\lceil \frac{n}{2}\rceil$ such that 
\[
\norm{c(S_1) - c(S_2)} \le  C(X) \,  \left\lceil\frac{n}{2} \right\rceil^w \diam S.
\]
It is enough to set $C(X) =  2^{1/p} T_p(X).$
\end{thm}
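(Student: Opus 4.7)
The plan is to handle the two parities of $n$ separately via a Rademacher-pairing argument against the type $p$ inequality. Throughout set $k = \lceil n/2 \rceil$.

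For even $n = 2k$, I would enumerate $S = \{x_1, \ldots, x_{2k}\}$ and form the pairs $(x_{2i-1}, x_{2i})$, $i \in [k]$. Each sign vector $\epsilon \in \{-1, +1\}^k$ determines a balanced partition $(S_1(\epsilon), S_2(\epsilon))$ where $\epsilon_i$ selects which element of the $i$-th pair goes into $S_1$. With $z_i := x_{2i-1} - x_{2i}$ satisfying $\norm{z_i} \le \diam S$, one has $c(S_1) - c(S_2) = \frac{1}{k} \sum_i \epsilon_i z_i$. Averaging over Rademacher signs and applying the definition of type $p$ yields the existence of some $\epsilon$ with $\norm{\sum \epsilon_i z_i} \le T_p(X) k^{1/p} \diam S$, hence $\norm{c(S_1) - c(S_2)} \le T_p(X) k^w \diam S$, comfortably within $2^{1/p} T_p(X) k^w \diam S$.

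For odd $n = 2k - 1$, the key trick I plan to use is to run the same construction on the multiset $\hat S := S \cup \{x_n\}$ of size $2k$ (so $x_n$ is duplicated), using $k$ pairs one of which is $(x_n, x_n)$. The degenerate pair contributes a zero difference, so only $k-1$ non-trivial summands enter, and there exist signs with $\norm{c(M_1) - c(M_2)} \le T_p(X)(k-1)^{1/p} \diam S / k$ for the resulting multiset partition $(M_1, M_2)$ of size $k$ each (both containing one copy of $x_n$). Removing the duplicate from $M_1$ produces a genuine partition $S_1 := M_1 \setminus \{x_n\}$, $S_2 := M_2$ of $S$ of sizes $(k-1, k)$. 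The identity $k\, c(M_1) = (k-1) c(S_1) + x_n$ rearranges to
\[
c(S_1) - c(S_2) = \bigl(c(M_1) - c(M_2)\bigr) + \frac{c(M_1) - x_n}{k-1},
\]
and since the summand for $x_n$ itself vanishes in $c(M_1) - x_n = \frac{1}{k}\sum_{y \in M_1}(y - x_n)$, one obtains $\norm{c(M_1) - x_n} \le \frac{k-1}{k}\diam S$. Combining these gives
\[
\norm{c(S_1) - c(S_2)} \le \frac{T_p(X)(k-1)^{1/p} + 1}{k}\diam S.
\]

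The claimed constant $2^{1/p} T_p(X)$ then follows from the elementary inequality $T_p(X)(k-1)^{1/p} + 1 \le T_p(X)(2k)^{1/p}$, equivalent to $T_p(X)[(2k)^{1/p} - (k-1)^{1/p}] \ge 1$, which holds for every $k \ge 2$ and $p \in (1, 2]$ because $T_p(X) \ge 1$ (from the defining inequality of type applied to a single vector) and $(2k)^{1/p} - (k-1)^{1/p} \ge 1$ with equality only at $(k,p) = (2,2)$. The main obstacle is the odd case: a naive Rademacher pairing delivers only balanced partitions, and the degenerate-pair trick above is the maneuver that both restores the size balance and squeezes out the clean constant $2^{1/p}$ through the final algebraic inequality.
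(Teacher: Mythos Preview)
Your argument is correct. The even case is immediate from the type-$p$ inequality applied to the paired differences, and in the odd case your duplication trick, the identity $c(S_1)-c(S_2)=\bigl(c(M_1)-c(M_2)\bigr)+\tfrac{c(M_1)-x_n}{k-1}$, and the elementary bound $(2k)^{1/p}-(k-1)^{1/p}\ge 1$ (equivalent to $(k-2)^2\ge 0$ at the extremal $p=2$, and monotone in $1/p$) combine to give exactly the stated constant $2^{1/p}T_p(X)$; the use of $T_p(X)\ge 1$ from the one-vector case is also fine.

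This is, however, a genuinely different route from the paper. The paper does not pair points at all: it proves a Jensen-type comparison (\Href{Lemma}{le:jensen_ineq}) showing that the average of $\phi(\gamma\,\sigma(Q))$ over \emph{all} balanced subsets $Q$ is dominated by the full Rademacher average $\EEop_{\mathbf{rad}}\phi(\radem{S})$, and then specializes in \Href{Corollary}{cor:half_points_approx}. That machinery is heavier but is what carries the colored case $r>1$ needed for \Href{Theorem}{thm:col_nodim_Tv}; it also naturally bounds $\norm{\centroid{Q^i}-\centroid{S}}$, which is the quantity actually used downstream. Your pairing argument is more elementary and in fact yields the sharper constant $T_p(X)$ (without the $2^{1/p}$) when $n$ is even, but as written it is monochromatic. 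If you want to align with the paper's applications, note that your approach extends to colors by pairing within each color class, and that your bound on $\norm{\centroid{S_1}-\centroid{S_2}}$ converts to one on $\norm{\centroid{S_j}-\centroid{S}}$ via $\lfloor n/2\rfloor\bigl(\centroid{S_1}-\centroid{S}\bigr)+\lceil n/2\rceil\bigl(\centroid{S_2}-\centroid{S}\bigr)=0$.
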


Using \Href{Theorem}{thm:Maurey_for_centroid}, our proofs of \Href{Theorem}{thm:col_nodim_Tv}, \Href{Theorem}{th:selection} and 
 \Href{Theorem}{th:epsnet} follow the same lines as  in \cite{adiprasito2019theorems}.
We tried to be as close to the original proofs in the Euclidean case as possible. 
Our results give the same asymptotic in $r$ as in \cite{adiprasito2019theorems} for the Euclidean case and constants are reasonably close (our constant in \Href{Theorem}{th:selection} and 
 \Href{Theorem}{th:epsnet} is $2 \sqrt 2 (\sqrt{2} + 1),$ which is  twice the constant obtained in 
 \cite{adiprasito2019theorems} for the Euclidean case).

In the next Section, we discuss the Maurey lemma and prove \Href{Theorem}{thm:Maurey_for_centroid}.
Then, in \Href{Section}{sec:proofs_main_result},  we prove the main results of the paper.

\section{Averaging technique}
\subsection{Maurey's lemma}
Maurey’s lemma \cite{pisier1980remarques} is  an approximate version of the  Carath{\'e}odory theorem for Banach spaces of  type $p > 1.$ 
We can formulate Maurey's lemma as follows (see also Lemma D in \cite{bourgain1989duality}).

{\it 
Let $S$ be a bounded set in a  Banach space $X$ and $a \in \co S.$
Then there exists a sequence $\{x_i\}_1^k \subset S$ such that for  vectors
$a_k = \frac{1}{k} \sum\limits_{i \in [k]} x_i$ the following inequality holds
\begin{equation*} 
\norm{a - a_k} \leq T_p (X) k^w \diam S.
\end{equation*}
}

As we need a slightly more general colored statement, we provide the proof of the following version of Maurey's lemma, which is trivially follows from the original one. 

We prefer to use a different notation for the averaging by the Rademacher variables.
For a finite set $S$ in a linear space $L,$ let $\radem{S}$ denote a random variable 
$\{-1,1\}^{|S|} \to L$ which takes $\pm s_1, \dots, \pm s_{|S|}$  with probability $1/2^{|S|}.$ That is, $\radem{S}$ is a choice of signs in the  sum  $\pm s_1, \dots, \pm s_{|S|}.$ By $\EEop\limits_{\mathbf{rad}}$ we denote the expected value of $\radem{S}.$ 
\begin{lemma}\label{le:CalCar}
Let $P_1, \ldots, P_r$ be $r$ point sets in $X$, $D = \max_{i\in [r]} \diam P_i$ and $\eta >0$. Assume that $B(a,\eta )\cap \conv P_i\ne \emptyset$ for every $i \in [r]$. 
Then there exist $r$ sequences $\{x^i_j\}_{j=1}^k \subset P_i$ such that for  vectors
$a^i_k = \frac{1}{k} \sum\limits_{j \in [k]} x^i_j$ the following inequality holds
$$\dist (a, \centroid{a^1_k, \dots, a^r_k}) \le  T_P(X)k^w r^w D + \eta.$$
\end{lemma}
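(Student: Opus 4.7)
The plan is a probabilistic argument: sample $k$ points independently from each $P_i$ and apply the type-$p$ inequality to the resulting sum of $kr$ independent mean-zero deviations, mirroring the i.i.d.\ sampling proof of Maurey's lemma.

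First I would use the hypothesis to extract, for each $i \in [r]$, a point $b_i \in \conv P_i$ with $\|a - b_i\| \le \eta$, together with a convex representation $b_i = \sum_l \lambda_{il} x_{il}$, $x_{il} \in P_i$. Introduce mutually independent random points $X^i_j$ (for $i \in [r]$, $j \in [k]$) with $\PP[X^i_j = x_{il}] = \lambda_{il}$, so that $\EE X^i_j = b_i$. Setting $a^i_k := \frac{1}{k} \sum_j X^i_j$ and $\bar b := \frac{1}{r} \sum_i b_i$, one has $\|a - \bar b\| \le \eta$ by convexity and
\[
\centroid{a^1_k, \ldots, a^r_k} - \bar b \;=\; \frac{1}{kr} \sum_{i, j} Y_{ij}, \qquad Y_{ij} := X^i_j - b_i,
\]
where the $kr$ vectors $Y_{ij}$ are independent, mean-zero, with $\|Y_{ij}\| \le \diam P_i \le D$.

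The crucial step is to bound $\EE \bigl\| \sum_{i,j} Y_{ij} \bigr\|$ using type $p$. The standard symmetrization trick---introducing independent copies $Y'_{ij}$, passing via Jensen to $\EE \|\sum (Y_{ij} - Y'_{ij})\|$, and inserting Rademacher signs $\epsilon_{ij}$---combined with the defining inequality of type $p$ and a further Jensen step gives
\[
\EE \Bigl\| \sum_{i,j} Y_{ij} \Bigr\| \;\le\; 2\, \EE \Bigl\| \sum_{i,j} \epsilon_{ij} Y_{ij} \Bigr\| \;\le\; 2\, T_p(X) \Bigl( \sum_{i,j} \EE \|Y_{ij}\|^p \Bigr)^{1/p} \;\le\; 2\, T_p(X)\, (kr)^{1/p} D.
\]
Dividing by $kr$ and combining with $\|a - \bar b\| \le \eta$ yields an expectation bound of at most $2\, T_p(X)\, k^w r^w D + \eta$ for $\dist\bigl(a, \centroid{a^1_k, \ldots, a^r_k}\bigr)$, from which the probabilistic method extracts a deterministic realization $\{x^i_j\}$ of the sequences attaining the bound.

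The main obstacle is matching the stated constant $T_p(X)$ exactly, without the symmetrization factor $2$. One way around this is to invoke a sharper, greedy form of Maurey's lemma (such as the one in \cite{ivanov_approx_car_19}) applied directly to the Minkowski-averaged set $\bigl\{ \frac{1}{r}\sum_i x_i : x_i \in P_i \bigr\}$, exploiting the fact that each of its elements is itself the average of $r$ independent samples, to extract the extra $r^w$ factor beyond what the naive diameter bound $\diam P \le D$ would give. In any case, the spare factor $2$ is harmlessly absorbed into the overall constants $C(X)$ appearing in the subsequent theorems; indeed, plugging into the proof of \Href{Theorem}{thm:Maurey_for_centroid} with $r = 2$ the identity $2 \cdot 2^w = 2^{1/p}$ reproduces precisely the stated $C(X) = 2^{1/p} T_p(X)$.
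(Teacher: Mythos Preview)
Your approach is exactly the paper's: pick $b_i\in B(a,\eta)\cap\conv P_i$, sample $X^i_j$ i.i.d.\ with mean $b_i$, symmetrize via independent copies, insert Rademacher signs, and apply the type-$p$ inequality. The only discrepancy is the spurious factor $2$, and the fix is simpler than the workarounds you suggest. After symmetrization you have
\[
\EE\Bigl\|\sum_{i,j}Y_{ij}\Bigr\|\;\le\;\EE\,\EE'\,\EE_{\mathbf{rad}}\Bigl\|\sum_{i,j}\epsilon_{ij}\bigl(X^i_j-(X^i_j)'\bigr)\Bigr\|;
\]
instead of splitting $X^i_j-(X^i_j)'$ back into two pieces (which costs the factor $2$), apply the type-$p$ inequality directly to this Rademacher sum and use that both $X^i_j$ and $(X^i_j)'$ lie in $P_i$, so $\|X^i_j-(X^i_j)'\|\le\diam P_i\le D$. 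This gives $T_p(X)(kr)^{1/p}D$ on the nose, and after dividing by $kr$ the stated bound $T_p(X)k^w r^w D+\eta$ follows.
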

\begin{proof}
Let $x_i$ be a point of $B(a,\eta )\cap \conv P_i,$ $ i \in [r].$ By the triangle inequality, 
it is enough to show that there is a proper multiset of points $T$ such that  
$$\norm{\centroid{T} - \centroid{\{x_1, \dots, x_r\}}} \le  T_p(X) k^w r^w D.$$
Since $x_i \in \conv P_i,$ there exist $y^i_1, \dots, y^i_{N_i} \in P_i$ and positive scalars
$\lambda^i_1, \dots, \lambda^i_{N_i},$ $\sum\limits_{j \in N_i} \lambda^i_j,$ such that 
$x_i = \sum\limits_{j \in N_i} \lambda^i_j y^i_j.$  Let $F_i$ be a $P_i$-valued random variable which takes $y^i_j$ with probability $\lambda^i_j.$ Let $F_i(1), \dots, F_i(k)$ and 
$F'_i(1), \dots, F'_i(k)$ be a series of  independent copies of $F_i,$ 
$i \in [r].$  Then
\[
\EEop\limits_{F_1, \dots, F_r} \norm{\sum\limits_{j \in [k]}\sum\limits_{i \in [r]} (F_i(j) - p_i)} \stackrel{\text{(Ave)}}{\le}  
\EEop\limits_{F_1, \dots, F_r} \EEop\limits_{F'_1, \dots, F'_r} \norm{\sum\limits_{j \in [k]}\sum\limits_{i \in [r]} (F_i(j) - F'_i(j))} \stackrel{\text{(S)}}{=}  
\]
\[
\EEop\limits_{F_1, \dots, F_r}  \EEop\limits_{\mathbf{rad}}\norm{ 
\radem{\left\{F_i(j) - F'_i(j)\right\}_{i \in [r]}^{j \in [k]}}} \le 
T_p(X) k^{1/p}r^{1/p} D,
\]
where in step (Ave) we use identity $\EEop\limits_{F_i} \left(F_i  - x_i\right) = 0,$  in step
 (S) we use  that  $(F_i(j) - F'_i(j))$'s are symmetric  and independent of each other, and the last inequality is a direct consequence of the definition of type $p$. 
 Dividing by $kr,$ we obtain the needed bound and complete the proof.
\end{proof}

In other words, we approximate a point in the convex hull by the centroid of a multiset, that is, one element might be counted several times.  But in our proofs we need  to choose different points or a subset of cardinality $k.$ And, as shows the following simple example, any point of the convex hull  cannot be approximated  by the centroids of  subsets which cardinalities are forced to be large. 

{\it \noindent Example.} Let points of a set $S$ are 'concentrated' around a unit vector $p.$
Adding $-p$ to $S,$ we may assume that $0 \in \conv S.$ It is easy to see, that  the centroid of any 
$Q \in \binom{S}{k}$ is at constance distance from the origin for $k \ge 4.$ 
 
However, as will be shown in the next Section, we always can approximate the centroid of the initial set by the centroids of its $k$-element subset.

\subsection{Approximation by the centroids}
Let $S$ is the disjoint  union of sets (considered colors) 
$Z_1, \dots, Z_r $, and each $Z_j$ has size $n \ge 2.$  
For any subset $Q$ of $S$ we use $Q_i$ to denote $Q \cap Z_i.$ 
Let $d \in [n-1].$  The set  of  all $(d \times r)$-element subsets $Q$ of $S$ such that $|Q_i| = d$ is denoted as $\binom{S}{d/r}.$ 
We use $(\Omega_d (S), p)$ to   denote a probability space on $\binom{S}{d/r}$  with the uniform distribution.  That is, the probability of choosing $Q \in \binom{S}{d/r}$ is 
\[
	\frac{1}{P_{(n,r)}^{d}},   \text{  where  } P_{(n,r)}^{d} = \binom{n}{d}^r .
\] 
We use $\sigma(S)$ to denote the sum of all elements of a set $S.$

The following statement is a direct corollary of  Jensen's inequality.

\begin{lemma}\label{le:jensen_ineq}
Under the above conditions, let additionally 
$S = \bigcup\limits_{i \in [r]} Z_i$ be  a subset of points in a linear space $L$  with  $\sigma(Z_i) = 0$ for all $i \in [r].$
Let $\phi$ be a convex function $L \to \R .$
Then
\begin{equation}\label{eq:lem_jensen_ineq}
   \EEop\limits_{\left(\Omega_d(S),p \right)} \phi(\gamma \sigma(Q)) \le   \EEop\limits_{\mathbf{rad}} \phi(\radem{S}),
\end{equation}
where $\gamma$ is real number from interval $(1,2)$ that depends only on $n$ and $d.$
\end{lemma}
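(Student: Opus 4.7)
The plan is to deduce \eqref{eq:lem_jensen_ineq} from Jensen's inequality applied to a carefully chosen joint distribution of a pair $(Q, A)$ with $Q \in \binom{S}{d/r}$ and $A \subseteq S$. The preliminary observation is that because $\sigma(Z_i) = 0$ for every $i$, setting $A = \{s \in S : \epsilon_s = +1\}$ gives $\radem{S} = \sigma(A) - \sigma(S \setminus A) = 2\sigma(A)$, so that $\EEop_{\mathbf{rad}} \phi(\radem{S}) = \EEop_{A} \phi(2\sigma(A))$ with $A$ a uniformly random subset of $S$. Hence it suffices to exhibit a Markov kernel $Q \mapsto A$ whose marginal on $A$ is uniform on $2^S$ and whose conditional expectation satisfies $\EEop[2\sigma(A) \mid Q] = \gamma\, \sigma(Q)$: one application of Jensen to the conditional distribution of $A$ given $Q$ will then yield
\[
\phi(\gamma \sigma(Q)) = \phi\bigl( \EEop[2\sigma(A)\mid Q]\bigr) \le \EEop\bigl[\phi(2\sigma(A))\mid Q\bigr],
\]
and averaging over $Q$ gives the desired bound because $A$ is marginally uniform.

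I build the coupling color by color. Sample $Q$ uniformly from $\binom{S}{d/r}$ and, independently for each $i \in [r]$, draw a target cardinality $K_i \sim \mathrm{Bin}(n,1/2)$. Given $Q_i$ and $K_i$, form $A_i \subseteq Z_i$ as follows: when $K_i \ge d$, put $A_i = Q_i \cup R_i$ where $R_i$ is a uniformly random $(K_i - d)$-subset of $Z_i \setminus Q_i$; when $K_i < d$, let $A_i$ be a uniformly random $K_i$-subset of $Q_i$. Set $A = \bigsqcup_i A_i$. A standard double-counting via $\binom{n}{d}\binom{n-d}{k-d} = \binom{n}{k}\binom{k}{d}$ (and the analogous identity for $k < d$) shows that, conditional on $K_i = k$, $A_i$ is uniform on $\binom{Z_i}{k}$; since $K_i$ itself is distributed as $|P \cap Z_i|$ for a uniformly random subset $P \subset Z_i$, the set $A$ is unconditionally uniform on $2^S$, as required.

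The conditional expectation calculation is then direct. Using $\sigma(Z_i \setminus Q_i) = -\sigma(Q_i)$, when $K_i \ge d$ we obtain
\[
\EEop\bigl[\sigma(A_i)\bigm| Q_i, K_i\bigr] = \sigma(Q_i) + \frac{K_i - d}{n - d}\,\sigma(Z_i \setminus Q_i) = \frac{n - K_i}{n - d}\,\sigma(Q_i),
\]
and when $K_i < d$, $\EEop[\sigma(A_i)\mid Q_i, K_i] = \tfrac{K_i}{d}\,\sigma(Q_i)$. In both cases the prefactor equals $\min\!\bigl(K_i/d,\,(n - K_i)/(n - d)\bigr) \in [0,1]$ and depends on $K_i$ alone, not on $Q_i$ or $i$. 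Averaging over $K_i$ (whose distribution is common to all $i$) produces a single scalar
\[
\gamma/2 \;:=\; \EEop_{K \sim \mathrm{Bin}(n,1/2)} \min\!\bigl(K/d,\,(n - K)/(n - d)\bigr)
\]
depending only on $n$ and $d$, and summing over $i$ yields $\EEop[2\sigma(A)\mid Q] = \gamma\,\sigma(Q)$, completing the Jensen step outlined above. That $\gamma \in (1,2)$ then follows from an elementary estimate on the binomial distribution, which I would verify by splitting the expectation at $K = d$.

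The main obstacle is to get the coupling to respect two constraints at once: the marginal of $A$ must be \emph{exactly} uniform on $2^S$ (so that averaging the right-hand side over $Q$ reproduces $\EEop_{\mathbf{rad}} \phi(\radem{S})$), while $\EEop[\sigma(A_i) \mid Q_i]$ must reduce to one and the \emph{same} scalar multiple of $\sigma(Q_i)$ across all $i$, so that summing over $i$ factors cleanly into $\gamma\,\sigma(Q)$. Drawing the auxiliary cardinalities $K_i$ from a common binomial distribution, independently of $Q$ and across colors, is the ingredient that makes this factorization, and hence the whole proof, work.
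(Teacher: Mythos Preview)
Your proposal is correct and, underneath the presentation, is the same argument as the paper's: both construct a coupling between a uniform $Q\in\binom{S}{d/r}$ and the Rademacher sign pattern in which, color by color, the ``positive set'' $A_i$ either contains or is contained in $Q_i$, and then apply Jensen's inequality to the conditional law of $A$ given $Q$. The paper builds this coupling by explicit combinatorial weights $1/W(Y)$ and a double-counting identity, whereas you realize the very same joint law by the two-step sampling $K_i\sim\mathrm{Bin}(n,1/2)$ followed by uniform enlargement/restriction of $Q_i$; one can check (via $\binom{n}{k}\binom{k}{d}=\binom{n}{d}\binom{n-d}{k-d}$) that your conditional distribution of $A$ given $Q$ coincides with the paper's, and your formula $\gamma/2=\mathbb{E}\min(K/d,(n-K)/(n-d))$ is exactly the paper's $\gamma$ after the change of summation index. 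Your packaging is cleaner and makes the conditional-expectation step transparent; the only place to be careful is the claimed strict inequality $\gamma>1$: your sketch (and the paper's computation) gives $\gamma=1$ when $n=2$, $d=1$, so the correct range is $\gamma\in[1,2)$, which is still sufficient for the downstream application.
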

\begin{proof} 
Firstly,  we explain the idea of the proof and then proceed with the technical details.
For a fixed $Q \in \binom{S}{d/r},$ we group all summands in the right-hand side of \eqref{eq:lem_jensen_ineq}  such  that $\radem{S}$ can be obtained 
from $\sigma(Q) - \sigma(S \setminus Q)$ by changing some signs either in the set $Q_i$ or in $Z_i \setminus Q_i$ for each $i \in [r].$ Then  we apply Jensen's inequality  for every  group of summands that corresponds to $Q \in \binom{S}{d/r}.$ Due to the symmetry,
 we understand that the argument of  $\phi$  looks like $a \sigma(Q) - b \sigma(S \setminus Q).$ Calculating the coefficients and using identity $\sigma(Q) = - \sigma(S \setminus Q),$ we prove the lemma.
 
 For  the sake of convenience, let $Q_i(+)  = Q_i$ and $Q_i(-) = Z_i \setminus Q_i,$
 $d(+) = d$ and $d(-) = n-d.$  Let $\CE_i (Q_i)$ be the set of all subsets $Y_i$  of $Z_i$ such that either 
 $Y_i$ is a non-empty subset of  $Q_i(+)$ or $Y_i \subset Q_i(-).$   We use $\CE(Q)$ to denote the set of all sets $Y \subset S$ such that $Y_i \in \CE_i (Q_i).$ For every $Y \in \CE(Q)$ we define $\sgn Y_i$ to be $+$
 or $-$ whenever $Y_i \subset Q_i(+)$ or $Y_i \subset Q_i (-),$ respectively.
 Let
 \[
 W(Y) = \prod\limits_{i \in [r]} \binom{d\left(-\sgn(Y_i)\right) +|Y_i|}{|Y_i|},
 \]     
be the weight of $Y \in \CE(Q).$

 Every signed sum  $\radem{S}$ can be represented as  
 $$\sigma(Q) - \sigma(S \setminus Q) - 2 \sum\limits_{i \in [r]} \sgn{Y_i} \cdot \sigma(Y_i),$$
 for some $Q \in \binom{S}{d/r}$ and $Y \in \CE{Q}.$   We see that $\sgn Y_i,$ $|Y_i|$ are determined by $\radem{S}$ in a unique way. Indeed, let $d_i$ be the number of pluses in the $i$-color of $\radem{S}.$  Then we need to swap exactly $d_i - d (+)$ of them to obtain a sum with exactly $d(+)$ pluses if $d_i  \ge d(+).$ If $d_i  < d(+),$ we need to swap exactly $d(+) - d_i = (n - d_i) - d(-)$ minuses.
 This implies that  the number  of such representations for a given $\radem{S}$ is $W(Y).$
 Therefore, we get
 \begin{equation}\label{eq:radem_representation}
  \EEop\limits_{\mathbf{rad}} \phi(\radem{S}) = \frac{1}{2^{nr}} \sum\limits_{Q \in \binom{S}{d/r}} \sum\limits_{Y \in \CE(Q)}
  \frac{1}{W(Y)} 
  \phi\!\!\left(\sigma(Q) - \sigma(S \setminus Q) - 2 \sum\limits_{i \in [r]} \sgn{Y_i} \cdot \sigma\left(Y_i\right) \right)
\end{equation}

By the symmetry and since the sum of coefficients at  $\phi$ is one, 
we have  
\begin{equation}\label{eq:double_count_total0}
\frac{1}{2^{nr}} \sum\limits_{Y \in \CE(Q)}
  \frac{1}{W(Y)}  = \frac{1}{P^d_{(n,r)}},
\end{equation}
for a fix $Q \in \binom{S}{d/r}.$ 

Using this and Jensen's inequality for each $Q \in \binom{S}{d/r}$ in the right-hand side of \eqref{eq:radem_representation}, we get that
\[
  \EEop\limits_{\mathbf{rad}} \phi(\radem{S}) \ge \frac{1}{P^d_{(n,r)}}  \sum\limits_{Q \in \binom{S}{d/r}} \phi\!\left(\sum\limits_{Y \in \CE(Q)}
  \frac{P^d_{(n,r)}}{2^{nr} W(Y)} \left[
  \sigma(Q) - \sigma(S \setminus Q) - 2 \sum\limits_{i \in [r]} \sgn{Y_i} \cdot \sigma\left(Y_i\right) \right] \right)
\] 

Let us carefully calculate the argument of $\phi$ in the right-hand side of the last inequality.
Due to the symmetry, this argument is 
$ \alpha^+ \sum\limits_{i \in [r]} \sigma(Q_i (+)) - \alpha^{-} \sum\limits_{i \in [r]}\sigma(Q_i (-)),$ where $\alpha^+$ and $\alpha^-$ are  some coefficients. Hence, it is enough to calculate the coefficients at $\sigma(Q_1(\pm) )$ only. 

Similarly to \eqref{eq:double_count_total0}, since the signs at the elements of different colors are chosen independently, we have
\begin{equation}\label{eq:double_count_total}
\frac{1}{2^{n(r-1)}} \sum\limits_{Y \in \CE(Q); \atop Y_1  \text{  is fixed}}
  \frac{W_1(Y)}{W(Y)}  = \frac{1}{P^d_{(n,r -1)}},
\end{equation}
where $W_1(Y) = \binom{d\left(-\sgn(Y_1)\right) +|Y_1|}{|Y_1|}.$

By \eqref{eq:double_count_total0} and \eqref{eq:double_count_total}, we see that in the part of the  sum containing elements of the first color with a fixed $Y_1 \in \CE_1{Q}$ is
 \[
  \sigma(Q_1 (+)) - \sigma( Q_1 (-) )  - 2 \frac{P^d_{(n,1)}}{2^{n} } 
  \sum\limits_{ Y \; : \; Y_1 \in \CE_1{Q}; \atop Y_1 \text{ is fixed}}    \frac{
   \sgn{{Y_1}} \cdot \sigma\left({Y_1}\right)}{W_1(Y)}.
\]

There are 
$ \binom{d(\sgn {Y_1})}{|{Y_1}|}$  possible sets  ${Y_1}$ for  fixed $\sgn{{Y_1}}$ and $|{Y_1}|.$  These  sets cover set $Q_1 (\sgn Y_1)$ uniformly. That is, we have
\begin{equation}\label{eq:sum_q1}
\sigma(Q_1(+)) - \sigma( Q_1 (-))  - 2\frac{P^d_{(n,1)}}{2^{n} } 
\sum\limits_{\sgn{{Y_1}} \text{ and } |{Y_1}| \atop are fixed} \sgn {Y_1} \cdot 
 \sigma\left(Q_1 (\sgn {Y_1})\right) \frac{M({Y_1})}{W_1({Y_1})},
\end{equation}

where $M({Y_1}) = \frac{|{Y_1}|}{d(\sgn {Y_1})} \binom{d(\sgn {Y_1})}{|{Y_1}|}.$
Using \eqref{eq:double_count_total0} for $r=1,$ we obtain
\[
\sum \frac{P^d_{(n,1)}}{2^{n}}  \frac{\binom{d(\sgn {Y_1})}{|{Y_1}|}}{W_1({Y_1})} = 1,
\]
where the summation is over all admissible $\sgn{{Y_1}}$ and $|{Y_1}|.$
Using this and identity $\sigma(Q_1(+)) = - \sigma(Q_1(-)),$ we have that the expression in \eqref{eq:sum_q1} is equal to $2 \gamma \sigma(Q_1(+)),$ where 
\[
\gamma = \frac{P^d_{(n,1)}}{2^{n}} \sum\limits_0^{d(-)} \left(1 - \frac{j}{d(-)} \right)   \frac{\binom{d(-)}{j}}{\binom{d(+) + j}{j}} + \frac{P^d_{(n,1)}}{2^{n}}\sum\limits_1^{d(+)} \left(1 - \frac{j}{d(+)} \right)   \frac{\binom{d(+)}{j}}{\binom{d(-) + j}{j}}.
\]
After simple transformations, we get
\[
\gamma = \frac{1}{2^n} \left[\sum\limits_0^{d(-)} \left(1 - \frac{j}{d(-)} \right)\binom{n}{d(-) - j}  + \sum\limits_1^{d(+)} \left(1 - \frac{j}{d(+)} \right)  \binom{n}{d(+) - j} \right].
\]
Since $d(-) = n - d(+)$ and $ d(-) - j = d(+) +j,$ we have that $\gamma  < 1.$
As for a lower bound on $\gamma,$ after simple transformations, we have
\[
2^n \gamma = \frac{n}{d(-)} \sum\limits_2^{d(-)} \binom{n-1}{d(-) - j} + \binom{n-1}{d(-) -1} 
+ \binom{n-1}{d(-)} +
\frac{n}{d(+)}  \sum\limits_1^{d(+)} \binom{n-1}{d(-) + j}.
\]
Clearly, the right-hand side here is strictly bigger than $2^{n-1}$ whenever $d(+) \in [n-1].$
Therefore, $2\gamma \in (1,2).$ This completes the proof. 
\end{proof}
The following statement is a colorful version of \Href{Theorem}{thm:Maurey_for_centroid}.
\begin{cor}\label{cor:half_points_approx} Under the above conditions, let additionally 
$S = \bigcup\limits_{i \in [r]} Z_i$ be  a subset of points in a Banach space $X,$ and $D = \max\limits_{i} \diam Z_i.$ Then there is  a partition  $Q^0, Q^1$  of $S$ with $|Q^0_j|= \lfloor \frac{n}{2} \rfloor$ and  $|Q^1_j|= \lceil \frac{n}{2} \rceil$    for every $j \in [r]$ such that
\[ 
 \norm{\centroid{Q^0} - \centroid{S}} \le  \norm{\centroid{Q^1} - \centroid{S}} 
\le C(X)  \left\lceil\frac{n}{2} \right\rceil ^w r^w \diam S
\] 
\end{cor}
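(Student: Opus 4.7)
The plan is to apply \Href{Lemma}{le:jensen_ineq} with $\phi = \norm{\cdot}$ and $d = \lceil n/2 \rceil$, after reducing to a centred configuration. First I would translate every $s \in Z_i$ by $-\centroid{Z_i}$. For any subset $Q \subset S$ with $|Q \cap Z_i|$ independent of $i$, this translation shifts $\centroid{Q}$ and $\centroid{S}$ by the common vector $\tfrac{1}{r}\sum_i \centroid{Z_i}$, so the target differences $\centroid{Q^0} - \centroid{S}$ and $\centroid{Q^1} - \centroid{S}$ are unchanged. After centring, $\sigma(Z_i) = 0$ for every $i$, whence $\centroid{S} = 0$, and $\norm{s} = \norm{s - \centroid{Z_i}} \le \diam Z_i \le D$ for every $s \in S$.

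Next, I would apply \Href{Lemma}{le:jensen_ineq} with $\phi = \norm{\cdot}$ and $d = \lceil n/2 \rceil$, using that $\phi$ is convex. The right-hand side of \eqref{eq:lem_jensen_ineq} is controlled by the definition of type $p$:
\[
\EEop_{\mathbf{rad}} \norm{\radem{S}} \le T_p(X)\!\left(\sum_{s \in S} \norm{s}^p\right)^{1/p} \le T_p(X) (nr)^{1/p} D.
\]
Because $\gamma > 1$, the lemma produces some $Q^1 \in \binom{S}{d/r}$ with $|Q^1 \cap Z_j| = \lceil n/2 \rceil$ for each $j$ and $\norm{\sigma(Q^1)} \le T_p(X) (nr)^{1/p} D$.

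Setting $Q^0 = S \setminus Q^1$, the identity $\sigma(S) = 0$ gives $\sigma(Q^0) = -\sigma(Q^1)$, so both blocks of the partition have the same $\norm{\sigma}$. The claimed bound now follows from the arithmetic $n \le 2\lceil n/2 \rceil$ together with $1/p = 1 + w$:
\[
\norm{\centroid{Q^1}} = \frac{\norm{\sigma(Q^1)}}{r\lceil n/2 \rceil} \le \frac{T_p(X) (nr)^{1/p} D}{r\lceil n/2 \rceil} \le 2^{1/p} T_p(X) \lceil n/2 \rceil^{w} r^{w} D,
\]
matching the constant $C(X) = 2^{1/p} T_p(X)$, and the companion estimate for $\norm{\centroid{Q^0}}$ is read off from $\norm{\sigma(Q^0)} = \norm{\sigma(Q^1)}$ and the size ratio.

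All the real work is already packaged inside \Href{Lemma}{le:jensen_ineq}; once that refined Maurey-type inequality is in hand, the corollary is an almost mechanical consequence. The one step deserving explicit attention is the initial centring: one must verify that it preserves the target differences and that it simultaneously makes $\norm{s} \le D$ hold for every $s \in S$, so that the type $p$ inequality yields the clean $(nr)^{1/p} D$ bound.
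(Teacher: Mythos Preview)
Your proposal is correct and mirrors the paper's own argument almost exactly: translate each $Z_i$ by $-\centroid{Z_i}$, apply \Href{Lemma}{le:jensen_ineq} with $\phi=\norm{\cdot}$ and $d=\lceil n/2\rceil$, bound the Rademacher average by $T_p(X)(nr)^{1/p}D$ via the type-$p$ inequality, and then divide by $r\lceil n/2\rceil$ to obtain the constant $2^{1/p}T_p(X)$. The only cosmetic difference is that you spell out the centring and the arithmetic $n\le 2\lceil n/2\rceil$ more explicitly, while the paper handles the comparison between $\centroid{Q^0}$ and $\centroid{Q^1}$ via the identity $d(\centroid{Q}-\centroid{S})+(n-d)(\centroid{S\setminus Q}-\centroid{S})=0$ rather than your equivalent observation $\sigma(Q^0)=-\sigma(Q^1)$.
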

\begin{proof}
The first inequality follows from identity 
$$
d\left(\centroid{Q} - \centroid{S} \right) + (n-d)\left(\centroid{S \setminus Q} - \centroid{S} \right) = 0,
$$ 
where $Q \in \binom{S}{d/r}.$ 

By \Href{Lemma}{le:jensen_ineq} for $d = \left\lceil\frac{n}{2} \right\rceil$ and sets $Z_i - \centroid{Z_i}$ and by the definition of type $p,$  we see that there exists $Q^1$ such that
\[
\norm{\sigma(Q^1) - d \centroid{S}} \le \gamma \norm{\sigma(Q^1) - d \centroid{S}}  \le T_p(X) (nr)^{1/p} D.
\]
Dividing the last inequality by $d = \left\lceil\frac{n}{2} \right\rceil,$ we get the needed inequality. 
Clearly, $C(X)$ can be chosen to be $2^{1/p} T_p(X).$

\end{proof}

\section{Proofs of the main results}\label{sec:proofs_main_result}
\begin{proof}[Proof of \Href{Theorem}{thm:col_nodim_Tv}]
We build an incomplete binary tree. Its root is $S$ and its vertices are subsets of $S$. The children of $S$ are $Q^0,  Q^1$ from  \Href{Corollary}{cor:half_points_approx}, the children of $Q^0$ resp. $Q^1$ are $Q^{00}, Q^{01}$ and $Q^{10}, Q^{11}$ obtained again by applying Corollary~\ref{cor:half_points_approx} to $Q^0$ and $Q^1$. 

We split the resulting sets into two parts of as equal sizes as possible the same way, and repeat. We stop when the set $Q^{\delta_1\ldots\delta_h}$ contains exactly one element from each color class. In the end we have sets $S_1,\ldots, S_r$ at the leaves. They form a partition of $S$ with $|S_i\cap Z_j|=1$ for every $j \in [r]$ and  $i \in [k]$. We have to estimate $\norm{\centroid{S_i} - \centroid{S}}$. Let $S,Q^{\delta_1},\ldots,Q^{\delta_1\ldots\delta_h},S_i$ be the sets in the tree on the path from the root to $S_i$. Using the Corollary gives
\begin{eqnarray*}
\norm{\centroid{S} - \centroid{S_i}} \le 
\norm{\centroid{S} - \centroid{Q^{\delta_1}}}  + 
\norm{\centroid{Q^{\delta_1}} - \centroid{Q^{\delta_1 \delta_2}}} 
+ \cdots + \norm{\centroid{Q^{\delta_1\ldots\delta_h}} - \centroid{S_i}} \\
\le C(X) \sum\limits_0^{\infty}  \left( 2^i\right)^w  r^{w} D \le
\frac{C(X)}{1 - 2^\omega} r^w  D.
\end{eqnarray*}

And the constant can be chosen to be $\frac{2^{1/p}}{1 - 2^w} T_p(X).$
\end{proof}

As in  \cite{adiprasito2019theorems}, \Href{Theorem}{thm:col_nodim_Tv} implies the following   statement.
\begin{cor}
 Given a set $P$ of $n$ points in a Banach space $X$ and
an integer $k \in [n],$ there exists a point $q$ and a
partition of $P$ into $k$ sets $P_1, \dots , P_k$ such that
\[
\dist (q, \conv P_i)  \le C(X) \left(\frac{k}{n} \right)^w \diam P 
\]
for every $i \in [k].$
 \end{cor}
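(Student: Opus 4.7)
The plan is to deduce this directly from \Href{Theorem}{thm:col_nodim_Tv}, whose conclusion already partitions its ground set into $k$ pieces. The one thing missing is a coloring of $P$ by $r$ classes of common size $k$ with $r$ of order $n/k$; once such a coloring is produced, the corollary is a routine change of variables.

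First, I set $r = \lfloor n/k \rfloor$ and $s = n - rk$, so $0 \le s < k$. Discarding any $s$ points of $P$ gives a set $P''$ of size $rk$ with $\diam P'' \le \diam P$. I then partition $P''$ arbitrarily into $r$ color classes $Z_1,\dots,Z_r$ of size $k$, so that $\max_i \diam Z_i \le \diam P$. Applying \Href{Theorem}{thm:col_nodim_Tv} produces a point $q$ and a partition $S_1,\dots,S_k$ of $P''$ with $|S_i \cap Z_j| = 1$ for every pair $(i,j)$ and
\[
\dist(q, \conv S_i) \le C(X)\, r^w \diam P, \qquad i \in [k].
\]

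Second, to obtain a partition of $P$ itself rather than of $P''$, I reinsert the $s$ removed points arbitrarily among the $S_i$ to form $P_1,\dots,P_k$. Enlarging a set only enlarges its convex hull, so the inequality $\dist(q, \conv P_i) \le \dist(q, \conv S_i)$ survives. Finally, since $w < 0$ and $r = \lfloor n/k \rfloor \ge n/(2k)$ whenever $k \le n$, one has $r^w \le 2^{-w}(n/k)^w$; absorbing the factor $2^{-w}$ into $C(X)$ yields the form of the bound claimed in the corollary.

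The whole argument is bookkeeping on top of \Href{Theorem}{thm:col_nodim_Tv}, and the only tiny obstacle is the non-divisibility of $n$ by $k$, dispatched by the trimming-and-reinsertion device above; equivalently, one could pad $P$ with duplicate copies of a single point, apply the theorem on the padded multiset, and then identify the duplicates at the end. No new analytic input beyond \Href{Theorem}{thm:col_nodim_Tv} is required.
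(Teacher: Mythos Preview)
Your argument is correct and is exactly the paper's approach: write $n = kr + s$ with $0 \le s < k$, discard $s$ points, split the rest into $r$ color classes of size $k$, apply \Href{Theorem}{thm:col_nodim_Tv}, and reinsert the discarded points. Your final estimate $r^{w} \le 2^{-w}(n/k)^{w}$ is the right one (indeed this is what the corollary presumably intends, since $r \approx n/k$); in any case $(n/k)^{w} \le (k/n)^{w}$ for $w<0$ and $k\le n$, so the stated inequality follows as well.
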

\begin{proof}
 Write $|P| = n = kr + s$ with $k \in \mathbb{N}$ so that $0 \le s \le k - 1.$ Then delete $s$ elements from $P$ and split the remaining set into sets (colors) 
 $C_1 , \dots , C_r,$  each of size $k.$ Apply
the colored version and add back the deleted elements(anywhere you like). The outcome is the required partition.

\end{proof}

\begin{proof}[Proof of \Href{Theorem}{th:selection}] This is a combination of \Href{Lemma}{le:CalCar} and the no-dimension Tverberg theorem, like in \cite{barany1982generalization}. We assume that  $n=kr+s$ with $0\le s\le r-1$ ($k$ an integer) and set $\gamma= C(X) r^{w} D,$ where $C(X)$ is a constant that appears in \Href{Theorem}{thm:col_nodim_Tv}. The no-dimension Tverberg theorem implies that $P$ has a partition $\{P_1,\ldots,P_k\}$ such that $\conv P_i$ intersects the ball $B\left(q, \gamma\right)$ for every $i \in [k]$ where $q \in X$ is a suitable point.

Next choose a sequence $1\le j_1\le j_2 \le \ldots \le j_r \le k$ (repetitions allowed) and apply \Href{Lemma}{le:CalCar} to the sets $P_{j_1},\ldots,P_{j_r}$, where we have to set $\eta= \gamma$.
If at this step we have chosen some points several times, we add other arbitrary chosen points of the set $P_{j}$ such that we use the number of appearances of $j$ in $1\le j_1\le j_2 \le \ldots \le j_r \le k$  elements of $P_j$ for each $j \in [r].$
 This gives a transversal $T_{j_1\ldots j_r}$ of $P_{j_1},\ldots,P_{j_r}$ whose convex hull intersects the ball
\[
B\left(q, \gamma + \eta \right).
\]
 So the convex hull of all of these transversals intersects $B\left(q, \gamma+\eta \right)$. They are all distinct $r$-element subsets of $P$ and their number is
\[
{k+r-1 \choose r}={\frac {n-s}r +r-1 \choose r} \ge r^{-r}{n \choose r},
\]
\end{proof}

\begin{proof}[Proof of \Href{Theorem}{th:epsnet}]
The proof is an algorithm that goes along the same lines as in the original weak $\eps$-net theorem \cite{alon1992point}. Set $F:=\emptyset$ and let $\mathcal{H}$ be the family of all $r$-tuples of $P$. On each iteration we will add a point to $F$ and remove $r$-tuples from $\mathcal{H}$.

If there is $Y \subset P$ with $\left(F + B(0, C(X) r^{w} D)\right)\cap \conv Y = \emptyset$, then apply \Href{Theorem}{th:selection} to that $Y$ resulting in a point $q\in X$ such that the convex hull of at least
\[
\frac 1{r^r}{\eps n \choose r}
\]
$r$-tuples from $Y$ intersect $B\left(q, C(X) r^{w} D \right)$. Add the point $q$ to $F$ and delete all $r$-tuples $Q\subset Y$ from $\mathcal{H}$ whose convex hull intersects $B\left(q, C(X) r^{w} D \right)$. On each iteration the size of $F$ increases by one, and at least $r^{-r}{\eps n \choose r}$ $r$-tuples are deleted from $\mathcal{H}.$ So after
\[
\frac {{n \choose r}}{\frac 1{r^r}{\eps n \choose r}} \le \frac {r^r}{\eps^r}
\]
iterations the algorithm terminates as there can't be any further $Y \subset P$ of size $\eps n$ with  $\left(F+ B\left(q, C(X) r^{w} D \right) \right)\cap \conv Y = \emptyset$. Consequently the size of $F$ is at most $r^r{\eps^{-r}}$.
\end{proof}

{\bf Acknowledgements.}  
The author wish to thank Imre B{\'a}r{\'a}ny for bringing the problem to my attention.

\end{document}